\newtheorem{theorem}{Theorem}
\newenvironment{proof}[1][Proof]{\textbf{#1.} }{\ \rule{0.5em}{0.5em}}
\long\def\symbolfootnote[#1]#2{\begingroup%
	\def\thefootnote{$\;$}\footnote[#1]{$^*$#2}\endgroup}
	\title{Remarks on the existence of measurable selectors}
\author{Joanna Jureczko \footnote{The author was partially supported by Wroclaw University of Science and Technology grant no. 8211104160.
		\newline 
		Mathematics Subject Classification:  54E45, 54C10, 03E05, 03E30.
		\newline 
		Keywords: measurable selector, perfect measure,  lower semi-continuous function, Kuratowski partition.}}
\begin{document}
	
	\maketitle
	
	\begin{abstract}
		A classical theorem from measure theory that gives a sufficient condition for a multifunction to have a measurable selection is Kuratowski and Ryll-Nardzewski Selection Theorem. The aim of this paper is to show some generalizations of this result. 
	\end{abstract}
	\section{Introduction}
	
	Undoubtedly, a classical theorem from measure theory which gives a sufficient condition for a multifunction to have a measurable selection is Kuratowski and Ryll-Nardzewski Selection Theorem, \cite{KRN}, (see also \cite{KM}).
	This theorem is a significant result of so-called theory of selectors closely related  to the axiom of choice and its various formulations.
	
	In the literature, there are different types of problems of the existence of selectors corresponding to different formulations of the axiom of choice. These formulations are equivalent on the basis of set theory, however, the topological issues to which they conduct are generally significantly different.
	
	The problem of the existence of measurable selectors has been  considered by researchers working in different parts of human knowledge and it is enough to observe that the  Kuratowski and Ryll-Nardzewski paper, \cite{KRN}, has been cited about 200 times in top cited scientific journals (not only mathematical ones).
	
	As the starting point, one has designated the following papers \cite{KP, FHJ, FGPR} and indirectly related
	\cite{F, AF, H, SZ}. 
	Papers \cite{KP, H} concern Borel sets of metric spaces but \cite{FGPR} concerns investigations of different assumptions of spaces. The paper  \cite{AF} is  mentioned here because it is the continuation and extension of \cite{FGPR} but papers \cite{H, SZ} are extensions of \cite{KP}.
	
	In paper \cite{F}  the author investigates the techniques which can be applied to the problem of measurability sets of different properties and connections between these properties and various sufficient conditions for spaces having these properties. The author gives also some generalizations of well-known theorems on selectors and formulates some open problems.
	
	In paper \cite{FHJ} the authors consider the question: If $\mathcal{A} = \{A_\alpha: \alpha < \kappa\}$ is a point finite family, (i.e. every $x \in X$ is in finitely many $A \in \mathcal{A}$) of subsets of a metric space $X$ such that  $\bigcup \mathcal{A}'$ 
	is Borel in $X$ for every $\mathcal{A}' \subseteq \mathcal{A},$ then there is a fixed $\beta < \omega_1$ such that every $\bigcup \mathcal{A}'$  
	is of Borel class $< \beta$. 
	As it turns out, this problem is related to  the question of A. H. Stone: if $f\colon X \to Y$ is a Borel isomorphism between metric
	spaces ($f$ is a bijection and $f$ and $f^{-1}$
	take open sets to Borel sets), then there is $\beta < \omega_1$ such that for all open $U \subseteq Y,$ $f^{-1}[U]$ is of Borel class $< \beta$ and what is more important for further research of the author of this paper to the existence of large cardinals, (i.e. measurable numbers), (see e.g. \cite{FD, TJ}). 
	
	The results presented in \cite{KP} are considered for analytic sets.
	\\We say that a set $A$ in a metric space $X$ is \textit{analytic} in $X$ if $A$ is the result of an operation $\mathcal{A}$ on closed sets in $X$. A topological space is \textit{absolutely analytic} iff it is homeomorphic to an analytic set in a complete metric space. 
	If $X$ and $Y$ are metric spaces, and $K(Y)$ is the family of non-empty compact subsets of $Y$, then $F\colon X\to K(Y)$ is \textit{of lower class $\alpha$} (where $\alpha$ is a countable ordinal) iff $\{x\colon F(x)\cap V\not = \emptyset\}$ is a Borel set of additive class $\alpha$ in $X$ whenever $V$ is open in $Y$. A function $f\colon X\to Y$ is \textit{of class $\alpha$} iff $f^{-1}(V)$ is a Borel set of additive class $\alpha$ in $X$ whenever $V$ is open in~$Y$. 
	\\The main result of \cite{KP} is the following theorem.
	\begin{theorem}[\cite{KP}]
		If $X$ is absolutely analytic, $Y$ is metric and $F\colon X \to K(Y)$ is of lower class $\alpha>0$, then there is a function $f\colon X\to Y$ of additive class $\alpha$ such that $f$ is a selector for $F$ (i.e., $f(x)\in F(x)$ for all $x\in X)$.
	\end{theorem}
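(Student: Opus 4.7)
The plan is to mimic the classical Kuratowski--Ryll-Nardzewski argument, but with careful bookkeeping so that each approximate selector stays within additive class $\alpha$. First I would reduce to the case in which $Y$ is separable: since each $F(x)$ is compact (hence separable) and the lower-class-$\alpha$ hypothesis on $F$ interacts well with a countable base of $Y$, one can replace $Y$ by a separable subspace containing $\bigcup_{x \in X} F(x)$. Fix a dense sequence $\{y_n\}_{n<\omega}$ in that subspace.

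Next, for each $k \geq 1$ and each $n$, put
\[
A_n^k = \{x \in X : F(x) \cap B(y_n, 2^{-k}) \neq \emptyset\}.
\]
By hypothesis every $A_n^k$ is Borel of additive class $\alpha$ in $X$, and since each $F(x)$ is nonempty, $X = \bigcup_n A_n^k$ for every $k$. The crucial technical step is the \emph{reduction principle} for the additive class $\alpha$ in absolutely analytic spaces, which yields, for each $k$, a pairwise disjoint partition $\{E_n^k\}_{n<\omega}$ of $X$ with $E_n^k \subseteq A_n^k$ and every $E_n^k$ still of additive class $\alpha$. Setting $f_k(x) = y_n$ whenever $x \in E_n^k$ gives, for any open $V \subseteq Y$,
\[
f_k^{-1}(V) = \bigcup\{E_n^k : y_n \in V\},
\]
a countable union of additive class $\alpha$ sets, hence of additive class $\alpha$.

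To obtain uniform convergence I would carry out the reduction recursively: given $f_k$, refine each piece $E_n^k$ by intersecting with the level-$(k+1)$ sets $A_m^{k+1}$ for those $m$ with $y_m \in B(y_n, 2^{-k} + 2^{-(k+1)})$, and then reapply reduction within $E_n^k$. This yields $d(f_{k+1}(x), f_k(x)) < 2^{-k+1}$ while maintaining $d(f_k(x), F(x)) < 2^{-k}$. By compactness of $F(x)$ the sequence $(f_k(x))_k$ is Cauchy with limit $f(x) \in F(x)$, so $f$ is a selector; and since uniform limits of functions of additive class $\alpha$ remain of additive class $\alpha$ for $\alpha \geq 1$, the resulting $f$ has the required class.

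The main obstacle is the reduction step: the naive disjointification $E_n^k = A_n^k \setminus \bigcup_{m<n} A_m^k$ involves complements and so generally escapes additive class $\alpha$. Overcoming this is precisely where absolute analyticity of $X$ enters, via the generalized reduction principle for the additive class $\alpha$ pointclasses on analytic sets. A secondary subtlety, needed for the uniform estimate, is that at step $k+1$ the reduction must be applied relativized within each previously chosen piece $E_n^k$ rather than to a fresh cover of $X$, so its compatibility with such relativization must also be tracked.
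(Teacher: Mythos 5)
The step that fails is your opening reduction to separable $Y$. Compactness of each individual value $F(x)$ gives no control over $\bigcup_{x\in X}F(x)$: take $X$ an uncountable discrete space (completely metrizable, hence absolutely analytic) and $F$ single-valued into an arbitrary non-separable metric $Y$; every such $F$ is of lower class $\alpha$ because every subset of $X$ is open, yet the union of the values need not be separable. The theorem you are proving is precisely the non-separable case --- that is the very title of \cite{KP}, and the paper itself states the result without proof, only noting that the proof of \cite{KP} relies on the techniques of \cite{KRN} --- so postulating a countable dense sequence $\{y_n\}$ deletes exactly the difficulty the theorem addresses; for separable $Y$ the statement is a routine class-by-class refinement of the Kuratowski--Ryll-Nardzewski argument. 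The actual proof keeps your skeleton (approximate selectors, disjointification, uniform convergence, uniform limits of class $\alpha$ functions remain of class $\alpha$) but replaces countable covers of $Y$ by $\sigma$-discrete open covers, which exist by metrizability of $Y$; one then needs that the induced point-finite (indeed $\sigma$-discrete) families of additive class $\alpha$ sets in $X$ are $\sigma$-discretely decomposable, and that unions of discrete subfamilies of class $\alpha$ sets are again of class $\alpha$. It is in this decomposability step (Hansell-type results) that absolute analyticity of $X$ is genuinely used.

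A secondary misplacement: you attribute the need for absolute analyticity to the countable reduction principle, but countable reduction for the additive classes $\alpha\geq 1$ holds in every metrizable space and needs no analyticity at all. So in your argument, as written, the hypothesis ``$X$ absolutely analytic'' is never really used --- a warning sign that the non-separable difficulty has been assumed away rather than overcome. The relativized refinement scheme you describe for securing the Cauchy estimates is sound and would complete the proof once the $\sigma$-discrete machinery replaces the countable one, but in its present form the proposal only establishes the separable case.
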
 
	It is worth noticing that the proof of the above theorem relies on  techniques used by Kuratowski and Ryll-Nardzewski in \cite{KRN}.
	
	The main results  of \cite{FGPR} concern the existence of a selector in a few essential cases (see Section 2.8 and 2.9).
	
	The aim of this paper is to prove that in the main results of \cite{FGPR} the assumption of cardinality of point-finite families and/or the image of lower semi-continuous functions can be omitted.

	\section{Definitions and previous results}
	
	\textbf{2.1.} Let $F \colon X \to \mathcal{P}(X)$ be a mapping. Any function $f \colon X \to Y$ such that $$f(x)\in F(x) \textrm{ for each } x \in X$$
	is called \textit{a selector for F}. 
	
	Let $\mathcal{R}$ be a field of subsets of $X$. Denote by $\mathcal{R}_\sigma$  a $\sigma$-field generated by $\mathcal{R}$. Let $\mathcal{S} \subset \mathcal{P}(X)$  be a $\sigma$-field. A function $f \colon X \to Y$ is \textit{$\mathcal{S}$-measurable} iff 
	$$f^{-1}(U) \in \mathcal{S} \textrm{ for each open } U \subset Y.$$
	A function $F \colon X\to \mathcal{P}(Y)$ is \textit{lower-$\mathcal{S}$} iff 
	$$\{x \in X \colon F(x)\cap U \not = \emptyset\}\in S \textrm{ for each open } U \subset Y$$
	and \textit{upper-$\mathcal{S}$} iff 
	$$\{x \in X \colon F(x)\cap A \not = \emptyset\}\in S \textrm{ for each closed } A \subset Y.$$
	If $\mathcal{S}$ is a family of all open subsets of $X$ and $F$ is lower-$\mathcal{S}$ then we say that $F$ is \textit{lower semi-continuous}. Respectively one defines $F$  \textit{upper semi-continuous} function.
	\\\\
	\textbf{2.2.} Let $X$ be a topological space. By $Cl^+(X)$  is denoted the family of all non empty closed subsets of $X$ and by $K^+(X)$ is denoted the family of all non-empty compact subsets of $X$.
	\\\\
	\textbf{2.3.} A family $\mathcal{A}$ of subsets of $X$ is \textit{point-finite} iff for each $x \in X$
	the set $\{A\ \in \mathcal{A} \colon x \in A\}$ is finite.
	\\\\
	\textbf{2.4.}  A family $\mathcal{A}$ of sets is \textit{compact} iff for each sequence $(A_i)_{i \in \omega} \subset \mathcal{A}$ if  $\bigcap_{i=0}^{m}A_i \not = \emptyset$ for each $m \in \omega$ then $\bigcap_{i=0}^{\infty}A_i \not = \emptyset$.
	\\\\
	\textbf{2.5.} Let $\mathcal{S}$ be a $\sigma$-field of subsets of $X$ and let $\mu$ be a measure defined on $\mathcal{S}$. All sets of $\mathcal{S}$ are called \textit{measurable}. By $(X, \mathcal{S},\mu)$ is denoted the space $X$ endowed with the measure structure. All measures considered in this paper are $\sigma$-additive and finite.
	\\\\
	\textbf{2.6.} A measure $\mu$ is \textit{perfect} iff for every $\mathcal{S}$-measurable function $f \colon X \to \mathbb{R}$ and $E \subset \mathbb{R}$ there exists a Borel set $B \subset E$ such that $\mu(f^{-1}(E)) = \mu(f^{-1}(B))$.
	\\\\Sazonov in \cite{VS} proved the following result. (Notice that the second part of this result is exactly the definition of so-called quasi-compact measure given by Ryll-Nardzewski in \cite{CRN}).
	\\\\
	\textbf{Fact 1.} A measure $(X, \mathcal{S},\mu)$ is perfect iff for each sequence of measurable sets $(E_i)_{i \in \omega}$and for each $\varepsilon > 0$ there exists a set $E \in \mathcal{S}$ such that $\mu(E)>\mu(X)-\varepsilon$ and the sequence $(E \cap E_i)_{i\in \omega}$ is a compact family.  
	\\\\
	\textbf{2.7.} 
	The following result is given by Kuratowski and Ryll-Nardzewski, \cite[p. 401]{KM}
	\\\\
	\textbf{Fact 2.} \textbf{[Kuratowski and Ryll-Nardzewski, (\cite{KRN})]} Let $Y$ be a Polish space, let $\mathcal{A}$ be a field of subsets of $X$, (no assumptions are made about topology of $X$). If $\mathcal{R}$ is a $\sigma$-field generated by $\mathcal{A}$ and a mapping $F \colon X \to P(Y)$ is lower-$\mathcal{R}$ then $F$ admits an $\mathcal{R}$-measurable selector $f$.
	\\\\
	\textbf{2.8.} The next three facts are the main results of \cite{FGPR}.
	\\\\
	\textbf{Fact 3.}  	Let $(X, \mathcal{S}, \mu)$ be a space with perfect measure. Let $\mathcal{B}$ be a point-finite cover consisting of sets of measure zero and such that $\mathcal{B}$ is less than the least measurable cardinal. Then there exists $\mathcal{B}' \subset \mathcal{B}$ such that $\bigcup\mathcal{B}'$ not measurable  (i.e. $\bigcup\mathcal{B}' \not \in \mathcal{S}$) and has inner measure zero.
	\\\\
	\textbf{Fact 4.}If $(X, \mathcal{S}, \mu)$ is a  metrizable space with a complete perfect  measure, $Y$ is a complete metric space,  $F \colon X \to Cl^+(Y)$ is lower semi-continuous and $|F(X)|$ is less than the least measurable cardinal then $F$ admits an $\mathcal{S}$-measurable selector.
	\\\\
	\textbf{Fact 5.}If $(X, \mathcal{S}, \mu)$ is a space with a complete perfect measure, $Y$ is a metric space, $F \colon X \to K^+(Y)$ is lower $\mathcal{S}$-measurable and $|F(X)|$ is less than the least measurable cardinal then $F$ admits a measurable selector.
	\\\\
	\textbf{2.8.} A \textit{pseudobase} for a space $X$ is a family $\mathcal{C}$ of non-empty subsets of $X$ such that for each non-empty subset $V \subseteq X$ there exists $C \in \mathcal{C}$ such that $ C \subseteq V$. The smallest cardinal of the pseudobase of $X$ is the \textit{pseudoweight} of $X$ and is denoted $\pi w(X)$.
	\\\\
	\textbf{Fact 6.\cite{FG}} Let $\mathcal{B}$ be a point-finite family of meager sets covering the pseudobasically compact space $X$ and $\pi w(X) \leqslant 2^\omega$. Then there exist a family $\mathcal{B}'\subset \mathcal{B}$ such that $\bigcup_{\mathcal{B}'}$ has not the Baire property.
	\\\\
	\textbf{Fact 7.\cite{FGPR}} If $X$ is a pseudobasically compact space, $\pi w(X) \leqslant 2^\omega$, $Y$ is a metric space and $F \colon X \to K^+(Y)$ is lower or upper Baire measurable then $F$ admits a measurable selector.
	
	\section{Main results}
	
	\begin{theorem}
		Let $(X, \mathcal{S}, \mu)$ be a space with perfect measure. Let $\mathcal{B}$ be a point-finite cover consisting of sets of measure zero. Then there exists $\mathcal{B}' \subset \mathcal{B}$ such that $\bigcup\mathcal{B}'$ not measurable  (i.e. $\bigcup\mathcal{B}' \not \in \mathcal{S}$) and has inner measure zero. 
	\end{theorem}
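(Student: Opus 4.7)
The plan is to argue by contradiction, following the strategy of Fact~3 but using Fact~1 (Sazonov's compactness characterization of perfect measures) in place of the cardinality restriction. Assume toward contradiction that every $\mathcal{B}' \subseteq \mathcal{B}$ satisfies $\bigcup\mathcal{B}' \in \mathcal{S}$ or $\mu_*(\bigcup\mathcal{B}') > 0$, and set $\mathcal{I} = \{\mathcal{B}' \subseteq \mathcal{B} : \mu_*(\bigcup\mathcal{B}') = 0\}$. Under the assumption, each $\mathcal{B}' \in \mathcal{I}$ satisfies $\bigcup\mathcal{B}' \in \mathcal{S}$ with $\mu(\bigcup\mathcal{B}') = 0$, so $\sigma$-additivity shows that $\mathcal{I}$ is a proper $\sigma$-ideal on $\mathcal{P}(\mathcal{B})$ containing every singleton (we may assume $\mu(X) > 0$, otherwise the statement is vacuous).

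The key structural observation is that point-finiteness of $\mathcal{B}$ passes to disjoint decompositions: for any countable pairwise disjoint family $(\mathcal{B}'_n)_{n<\omega}$ of subfamilies of $\mathcal{B}$, the sequence of unions $(\bigcup\mathcal{B}'_n)_{n<\omega}$ is itself a point-finite family in $X$, so $\bigcap_n\bigcup\mathcal{B}'_n = \emptyset$ whenever infinitely many $\mathcal{B}'_n$ are nonempty. When each $\mathcal{B}'_n \in \mathcal{I}$, so that $\bigcup\mathcal{B}'_n$ is measurable and null, Fact~1 applies: for every $\varepsilon > 0$ it produces $E \in \mathcal{S}$ with $\mu(E) > \mu(X) - \varepsilon$ such that $(E \cap \bigcup\mathcal{B}'_n)_n$ is a compact family, so empty full intersection forces some finite subintersection to be empty inside $E$.

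The concluding step would be to build a countable partition $\mathcal{B} = \bigcup_n \mathcal{B}'_n$ with the $\mathcal{B}'_n$ pairwise disjoint and each $\mathcal{B}'_n \in \mathcal{I}$; by $\sigma$-additivity of $\mathcal{I}$ this would give $\mathcal{B} \in \mathcal{I}$, contradicting $\mu(\bigcup\mathcal{B}) = \mu(X) > 0$. The principal obstacle is that such a partition need not exist when $|\mathcal{B}|$ is large modulo $\mathcal{I}$ --- precisely the configuration excluded by the cardinality bound of Fact~3. Without that bound, I would construct the partition by a transfinite exhaustion along a well-ordering $\mathcal{B} = \{B_\alpha : \alpha < \lambda\}$, using Fact~1 at each successor stage to extract an $\mathcal{I}$-chunk carrying a fixed fraction of the remaining outer measure inside a shrinking set $E_\alpha$ of almost full measure. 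The delicate step is showing that this process cannot get stuck at a limit of uncountable cofinality, which amounts to proving that perfectness of $\mu$ together with point-finiteness of $\mathcal{B}$ prevents $\mathcal{I}$ from having long gaps; this is where the cardinality hypothesis of Fact~3 would have been invoked, and so is where the additional work required by the present theorem must be done.
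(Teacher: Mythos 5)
Your proposal is not a proof: its decisive step is explicitly left undone. The entire weight of the argument is placed on producing a countable partition of $\mathcal{B}$ into pieces lying in the ideal $\mathcal{I}$, and you concede that the transfinite exhaustion may ``get stuck at a limit of uncountable cofinality'' and that resolving this ``is where the additional work required by the present theorem must be done.'' But that is exactly the content of the theorem once the cardinality bound of Fact~3 is dropped; stating that the gap is located there does not close it. Moreover, the successor-stage step is itself only gestured at: it is never said what ``an $\mathcal{I}$-chunk carrying a fixed fraction of the remaining outer measure'' means, nor how Fact~1 would produce it. There is also a structural misuse of Fact~1 in your ``key observation'': you apply it to a sequence of \emph{measurable null} sets $\bigl(\bigcup\mathcal{B}'_n\bigr)_n$, but for such a sequence the conclusion of Fact~1 is vacuous --- taking $E = X \setminus \bigcup_n\bigcup\mathcal{B}'_n$, which has full measure, makes every $E\cap\bigcup\mathcal{B}'_n$ empty, so the compactness condition holds trivially and no information about perfectness is extracted. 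To contradict perfectness one must exhibit a sequence of sets of \emph{positive} measure whose traces on every almost-full $E$ have nonempty finite intersections yet empty total intersection.

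That is precisely what the paper does, and it bypasses your partition problem altogether. Under the contradiction hypothesis it considers the multiplicity level sets $W_n = \{x \in X : |\{B \in \mathcal{B} : x \in B\}| = n\}$, picks the least $n_0$ with $\mu(W_{n_0})>0$, and builds a single decreasing countable sequence $W^k_{n_0}(J)$ of sets of positive measure with rapidly decreasing measures, so that the intersection is null, together with the associated subfamilies $\mathcal{B}^k_{n_0}(J)$ of members of $\mathcal{B}$ meeting them. The finite intersections of the corresponding unions are nonempty, while for a suitable $J_0$ the total intersection is empty; this configuration is incompatible with the compactness characterization of perfect measures (Fact~1), giving the contradiction directly. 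Your outline would need either to carry out the limit-stage argument you postponed or to switch to a construction of this kind, where perfectness is tested against positive-measure sets rather than null ones.
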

	
	\begin{proof}
		Suppose, in contrario, that all $\bigcup \mathcal{B}'$ are measurable for all $\mathcal{B}'\subset \mathcal{B}$. Consider the sets
		$$W_n = \{x \in X \colon |\{B \in \mathcal{B} \colon x \in B\}| = n\},$$
		where $n \in \omega$. There exists $n \in \omega$ such that $\mu(W_n) > 0$. 
		\\Let $n_0 = \min\{n \in \omega \colon \mu(W(n)) >0\}$. 
		Enumerate $$\mathcal{B} = \{B_i \colon i \in I\}.$$ 
		Let $V_x = \{i\in I \colon x \in B_i\}$. By our assumption, $|V_{x}|< \omega$ for any $x \in X$.
		For any $J \subset I$ consider
		$$W_{n_0}(J) = \{x \in X \colon V_x\cap J \not = \emptyset\}$$
		Now, construct a countable sequence of elements $(W^k_{n_0}(J))_{k \in \omega}$ having the following properties
		\begin{itemize}
			\item[(1)] $W^0_{n_0}(J) = W_{n_0}(J)$,
			\item[(2)] $W^{k+1}_{n_0}(J) \subset W^k_{n_0}(J)$ for any $k \in \omega$,
			\item[(3)] $\mu(W^{k}_{n_0}(J)) >0$ for any $k \in \omega$,
			\item[(4)] $\mu(W^{k+1}_{n_0}(J)) < \frac{1}{2^n} \mu(W^{k}_{n_0}(J))$ for any $k \in \omega$.	
		\end{itemize}
		
		Let
		$$\mathcal{B}_{n_0}^k(J) = \{B_i \in \mathcal{B} \colon  \exists_{x \in W_{n_0}^k(J)}\  x \in B_i\}$$
		By $(2)$ we conclude $ \mu(\bigcap_{k=0}^{\infty}W^k_{n_0}(J)) = 0$.
		By $(1)-(4)$ for any $m=0,1,...$ we have $\bigcap_{k=0}^{m}W^{k}_{n_0}(J) \not = \emptyset$  and hence $\bigcap_{k=0}^{m}(\bigcup \mathcal{B}^k_{n_0}(J)) \not = \emptyset$. 
		However, there exists $J\subset I$ such that 
		$\bigcap_{k=0}^{\infty} \bigcup \mathcal{B}^k_{n_0}(J_0) = \emptyset$.  Hence for no $\varepsilon>0$ and no $E \in \mathcal{S}$ such that $\mu(E) > \mu(X)-\varepsilon$ and $\bigcap_{k=0}^{m} (\bigcup E\cap \mathcal{B}^k_{n_0}(J_0)) \not = \emptyset$ there is  $\bigcap_{k=0}^{\infty} (\bigcup E \cap\mathcal{B}^k_{n_0}(J_0) ) \not = \emptyset$. By Fact 1 we have that $(X, \mathcal{S}, \mu)$ is not perfect. A contradiction.	
	\end{proof}
	
	\begin{theorem}
		Let $(X, \mathcal{S},\mu)$ be a metrizable space with a complete perfect measure. Let $Y$ be a complete metric  space. If $F\colon X \to Cl^+(Y)$ is lower semi-continuous, then $F$ admits an $\mathcal{S}$-measurable selector. 
	\end{theorem}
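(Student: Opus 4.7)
\medskip

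\textbf{Plan.} The statement is precisely Fact 4 with the cardinality assumption on $|F(X)|$ removed. Since Theorem 1 is precisely Fact 3 with its analogous cardinality assumption removed, the natural strategy is to reproduce the argument of Fact 4 from \cite{FGPR} verbatim, replacing every invocation of Fact 3 by an invocation of Theorem 1. If every step of the original proof used the cardinality hypothesis only when citing Fact 3, then Theorem 1 immediately upgrades Fact 4 to Theorem 2.

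\textbf{Setup.} For each $n\in\omega$, because $Y$ is a metric space it is (fully) paracompact, so the open cover $\{B(y,2^{-n}):y\in Y\}$ admits a $\sigma$-discrete, hence point-finite, open refinement $\mathcal{V}_n$, whose members have diameter at most $2^{-n+1}$. By lower semi-continuity, for every $V\in\mathcal{V}_n$ the set $U_V:=\{x\in X:F(x)\cap V\neq\emptyset\}$ is open in $X$ and therefore lies in $\mathcal{S}$. Since $F(x)\neq\emptyset$ for every $x$, the family $\{U_V:V\in\mathcal{V}_n\}$ is a cover of $X$ by measurable sets. From these families one constructs, inductively in $n$, an $\mathcal{S}$-measurable $2^{-n}$-approximate selector $f_n\colon X\to Y$ (whose values are centers of balls in $\mathcal{V}_n$) such that the sequence $(f_n(x))_{n\in\omega}$ is Cauchy for every $x$; completeness of $Y$ and closedness of $F(x)$ then force the limit $f(x):=\lim_n f_n(x)$ to be an $\mathcal{S}$-measurable selector, using completeness of $\mu$ to absorb any null exceptional sets.

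\textbf{Where Theorem 1 enters.} Suppose, for contradiction, that the inductive construction of $f_n$ fails at some stage; that is, suppose some measurability obstruction prevents one from measurably choosing, for each $x$, an index $V\in\mathcal{V}_n$ with $F(x)\cap V\neq\emptyset$ compatibly with the already-chosen $f_{n-1}$. Tracking the obstruction as in \cite{FGPR} exhibits a point-finite family $\mathcal{B}$ of measure-zero sets in $(X,\mathcal{S},\mu)$ such that $\bigcup\mathcal{B}'\in\mathcal{S}$ for every $\mathcal{B}'\subseteq\mathcal{B}$. In the original proof of Fact 4 this situation was ruled out by Fact 3 together with the bound $|F(X)|<$ the least measurable cardinal (which was used to bound $|\mathcal{B}|$). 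Theorem 1 excludes this situation \emph{outright}, with no cardinality hypothesis: a point-finite cover of measure-zero sets always has a subfamily with non-measurable union. The contradiction obtained is thus exactly the same, but the cardinality assumption on $|F(X)|$ becomes unnecessary.

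\textbf{Main obstacle.} The essential work is already done in Theorem 1; the remaining step is the bookkeeping of Step 2 above, namely verifying that the obstruction to building a measurable $2^{-n}$-approximate selector really does produce a point-finite family of null sets (rather than a family that is merely, say, $\sigma$-locally finite, which would require additional argument). This uses the point-finiteness of $\mathcal{V}_n$ in $Y$ together with the fact that each $x\in X$ sits in only finitely many $U_V$ that are relevant at stage $n$, after restricting via the previously chosen $f_{n-1}$. Once this point-finiteness is in hand, Theorem 1 closes the argument and Theorem 2 follows.
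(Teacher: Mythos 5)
There is a genuine gap, and it sits exactly where you park it as ``bookkeeping''. For a merely closed-valued $F$ the family $\{U_V : V\in\mathcal{V}_n\}$, with $U_V=\{x\in X: F(x)\cap V\neq\emptyset\}$, need not be point-finite in $X$: a closed but non-compact value $F(x)$ can meet infinitely many members of a locally finite (even $\sigma$-discrete) open cover of $Y$, and restricting attention to the members near $f_{n-1}(x)$ does not help, since a ball of radius $2^{-n+1}$ can still meet infinitely many members of $\mathcal{V}_n$ and $F(x)$ intersected with that ball is still non-compact. So the ``point-finite family of null sets'' that Theorem 1 is supposed to eliminate is never actually exhibited; point-finiteness of these trace families is available precisely in the compact-valued situation, which is Fact 5 / Theorem 3, and the uniform-limit construction of $2^{-n}$-approximate selectors you sketch is the method for that theorem, not for Fact 4. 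Your ``obstruction'' framing also misdescribes how Theorem 1 enters: it is not invoked to contradict a vaguely specified failure of an induction, but positively, as a countable reduction principle --- a point-finite cover by measurable sets all of whose subunions are measurable must concentrate its measure on countably many members, since otherwise the null members would form a point-finite cover of a set of positive outer measure by null sets with all subunions measurable, contradicting Theorem 1.

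The paper gets the needed point-finiteness for free by a different decomposition. Enumerate the distinct values $F(X)=\{F_\alpha\}$, pick for each $n$ and each $\alpha$ a single $U^n_\alpha\in\mathcal{U}_n$ (a locally finite open cover of $Y$ of mesh $<1/n$) with $U^n_\alpha\cap F_\alpha\neq\emptyset$, set $G^n_\alpha=\{x: F(x)\cap U^n_\alpha\neq\emptyset\}$ and form the first-difference sets $H^n_\alpha=G^n_\alpha\setminus\bigcup_{\beta<\alpha}G^n_\beta$; these are pairwise disjoint (hence trivially point-finite) $F_\sigma$ sets covering $X$, so Theorem 1 yields a countable $A_n$ with $\mu\bigl(X\setminus\bigcup\{H^n_\alpha:\alpha\in A_n\}\bigr)=0$. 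The selector is then not built as a limit of approximations at all: one forms the separable set $T=\bigcap_{n}\bigcup\{U^n_\alpha:\alpha\in A_n\}$, checks that $F(x)\cap T\neq\emptyset$ for almost every $x$ (closedness of the values and completeness of $Y$ are used here), applies the Kuratowski--Ryll-Nardzewski theorem (Fact 2) to $F$ restricted to $\{x: F(x)\cap T\neq\emptyset\}$, and extends arbitrarily over the remaining null set, measurability of the extension being absorbed by completeness of $\mu$. So while your opening plan (rerun the argument of \cite{FGPR} with Theorem 1 in place of Fact 3) is the right strategy and is what the paper does, the argument you actually sketch is a different one whose key point-finiteness claim fails for closed-valued maps; to repair it you would have to switch to the one-set-per-value, first-difference decomposition and the separable reduction just described.
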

	
	\begin{proof} (We follow the method of \cite{FGPR}).
		We will define a separable $T \subset Y$ such that 
		$$\mu(X\setminus \{x \in X \colon F(x)\cap T \not = \emptyset\}) = 0.$$
		Then, Fact 2 there is an $\mathcal{S}$-measurable selector $\overline{f}$ for $F|\{x \in X \colon F(x)\cap T \not = \emptyset\}$. Our required selector $f$ will be defined as follows
		$$
		f(x) = \left\{ \begin{array}{ll}
		\overline{f}(x) & \textrm{iff $F(x)\cap T \not = \emptyset$}\\
		y & \textrm{iff $F(x)\cap T = 0$}
		\end{array} \right.
		$$
		where $y\in F(x)$ is any element.
		Now, we construct the required $T$. Let $$F(X) = \{F_\alpha \colon \alpha < |F(X)| \colon F_\alpha \not = F_\beta \textrm{ for }  \alpha \not = \beta\}.$$
		By induction on $n$ we construct a sequence $\{\mathcal{U}_n \colon n \in \omega\}$   such that
		\begin{itemize}
			\item[(1)] $\mathcal{U}_n$ is a  locally finite open cover $Y$ 
			\item[(2)] $\forall_{n \in \omega}\  \mathcal{U}_n = \{U \in Y \colon diam(U) < \frac{1}{n}\}$
			\item[(3)] $\forall_{\alpha < |F(X)|}\  \forall_{n \in \omega}\  \mathcal{U}^n_{\alpha} \in \mathcal{U}_n$  and $U^n_\alpha \cap F_\alpha \not = \emptyset$,
			\item[(4)] $G^n_{\alpha} = \{x \in X \colon F(x)\cap U^n_\alpha \not = \emptyset\},$
			\item[(5)] $H_\alpha^n = G^n_{\alpha} \setminus \bigcup \{G^n_{\beta} \colon \beta < \alpha\}$. 
		\end{itemize}
		Notice that the set in $(5)$ is a  cover of $X$ consisting of disjoint $F_\sigma$ sets.	
		By Theorem 1, there exists a countable $A_n \subset |F(X)|$  such that $\mu(H^n_{\alpha}) >0$ for all $\alpha  \in A_n$  and $\mu(X \setminus \bigcup \{H^n_\alpha \colon \alpha \in A_n\}) =0$.  (It is enough to consider $\mathcal{U}_{n+1}$ as a  locally finite open cover $\bigcup \{U^n_\alpha \colon \alpha \in A_n\}$ instead of (1)).
		Now, take 
		$$T = \bigcap \{\bigcup\{U^n_\alpha \colon \alpha \in A_n\}\} \colon n \in \omega$$ is the required countable set.
	\end{proof}
	
	\begin{theorem}
		Let $(X, \mathcal{S}, \mu)$ be a space with a complete perfect measure and let $Y$ be a metric space. Let $F \colon X \to K^+(Y)$ be a lower $\mathcal{S}$-measurable function. Then $F$ admits a measurable selector. 
	\end{theorem}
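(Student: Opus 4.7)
The plan is to follow the strategy used for Theorem 2, adapting it to the present setting where $F$ is only lower $\mathcal{S}$-measurable (rather than lower semi-continuous) and $Y$ is only metric (rather than complete metric), but where each $F(x)$ is compact. As in Theorem 2, the goal will be to produce a separable set $T\subseteq Y$ with $\mu\bigl(\{x\in X\colon F(x)\cap T=\emptyset\}\bigr)=0$, and then invoke Fact 2 (Kuratowski--Ryll-Nardzewski) on the restriction $F\cap T$ to obtain an $\mathcal{S}$-measurable selector on the conegligible set, extending it to $X$ arbitrarily by picking $y\in F(x)$ on the null exceptional set (this extension is still $\mathcal{S}$-measurable because $\mathcal{S}$ is complete).

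For the construction of $T$, I would enumerate $F(X)=\{F_\alpha\colon\alpha<|F(X)|\}$ and build by induction on $n$ a locally finite open cover $\mathcal{U}_n$ of $Y$ whose members have diameter less than $1/n$; paracompactness of metric spaces guarantees such covers exist. For each $\alpha$ I would pick $U_\alpha^n\in\mathcal{U}_n$ with $U_\alpha^n\cap F_\alpha\neq\emptyset$ and set
\[
G_\alpha^n=\{x\in X\colon F(x)\cap U_\alpha^n\neq\emptyset\},\qquad H_\alpha^n=G_\alpha^n\setminus\bigcup_{\beta<\alpha}G_\beta^n.
\]
The key point is that here $G_\alpha^n\in\mathcal{S}$ comes for free from the lower $\mathcal{S}$-measurability of $F$, so we do not need the $F_\sigma$ reasoning available in Theorem 2. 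The sets $\{H_\alpha^n\}_\alpha$ form a disjoint (hence point-finite) measurable partition of $X$, so Theorem 1 applies without any cardinality restriction, producing a countable $A_n$ with $\mu\bigl(X\setminus\bigcup_{\alpha\in A_n}H_\alpha^n\bigr)=0$. I would also arrange that $\mathcal{U}_{n+1}$ refines the cover $\bigcup_{\alpha\in A_n}U_\alpha^n$, so that the sets $V_n=\bigcup_{\alpha\in A_n}U_\alpha^n$ are decreasing.

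To define $T$, I would choose a point $t_\alpha^n\in U_\alpha^n\cap F_\alpha$ for each $n$ and each $\alpha\in A_n$ and set $T$ to be the closure in $Y$ of the countable set $D=\{t_\alpha^n\colon n\in\omega,\ \alpha\in A_n\}$; then $T$ is separable. For almost every $x$, one has $x\in H_{\alpha_n}^n$ for a unique $\alpha_n\in A_n$, hence $F(x)\cap U_{\alpha_n}^n\neq\emptyset$; choosing $y_n\in F(x)\cap U_{\alpha_n}^n$ gives $d(y_n,t_{\alpha_n}^n)<1/n$. By compactness of $F(x)$ a subsequence $y_{n_k}$ converges to some $y^*\in F(x)$, and by the triangle inequality $t_{\alpha_{n_k}}^{n_k}\to y^*$ as well, so $y^*\in F(x)\cap\overline{D}=F(x)\cap T$.

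The main obstacle will be this final compactness step: in a non-complete ambient $Y$, the naive $G_\delta$ definition $T=\bigcap_n V_n$ used in Theorem 2 can collapse, and one must exploit the compactness of the values $F(x)$ to keep $F(x)\cap T$ nonempty. A secondary subtlety is that, when applying Fact 2 to $F\cap T$, one needs the target to be Polish; here this is clean because $T$ is a closed separable subset of $Y$ and $F(x)\cap T$ is compact, so by passing, if necessary, to the completion $\widehat{T}$ of $T$ one obtains a closed-valued, lower $\mathcal{S}$-measurable multifunction into a Polish space, to which Fact 2 applies directly.
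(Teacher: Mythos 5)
Your route is genuinely different from the paper's: the paper proves this theorem by redoing the Kuratowski--Ryll-Nardzewski successive-approximation scheme (building $\mathcal{S}$-measurable $f_n$ with $\rho(f_n(x),F(x))$ small via locally finite covers, using Theorem 1 at each stage to keep only countably many pieces, and taking the uniform limit), whereas you reduce to Fact 2 after constructing a separable set $T$ that meets $F(x)$ almost everywhere, i.e.\ you transplant the proof of Theorem 2. Two of your adaptations are real improvements over a naive transplant: taking $T=\overline{D}$ for a countable $D$ and using compactness of $F(x)$ to extract a limit point in $F(x)\cap T$ removes any need for completeness of $Y$ (the $G_\delta$-intersection of Theorem 2 would indeed collapse here), and the remark that $\bigcup_{\beta<\alpha}G^n_\beta=\{x\colon F(x)\cap\bigcup_{\beta<\alpha}U^n_\beta\neq\emptyset\}$ is a hit-set of an \emph{open} subset of $Y$ makes each $H^n_\alpha$ measurable without the $F_\sigma$ reasoning.

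There is, however, a concrete gap at the step ``the sets $\{H^n_\alpha\}_\alpha$ form a disjoint measurable partition, so Theorem 1 applies, producing a countable $A_n$ with $\mu(X\setminus\bigcup_{\alpha\in A_n}H^n_\alpha)=0$.'' Theorem 1 does not produce this directly: the standard derivation is to let $A_n$ index the (at most countably many) $H^n_\alpha$ of positive measure, set $N=X\setminus\bigcup_{\alpha\in A_n}H^n_\alpha$, and, if $\mu(N)>0$, apply Theorem 1 to the point-finite cover of $N$ by the remaining null sets to obtain a \emph{non-measurable} subunion --- which is a contradiction only if you already know that every subunion $\bigcup_{\alpha\in A}H^n_\alpha$ lies in $\mathcal{S}$. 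Individual measurability of the $H^n_\alpha$ is not enough: an arbitrary union of them is $\{x\colon\min\{\beta\colon F(x)\cap U^n_\beta\neq\emptyset\}\in A\}$, which cannot be rewritten as $\{x\colon F(x)\cap V\neq\emptyset\}$ for an open $V\subseteq Y$, and a $\sigma$-field only gives countable unions; this is precisely the point the original cardinality hypothesis was guarding. The fix stays inside your scheme: drop the transfinite disjointification and run the exhaustion argument on the family $C_U=\{x\colon F(x)\cap U\neq\emptyset\}$, $U\in\mathcal{U}_n$, which is point-finite because $F(x)$ is compact and $\mathcal{U}_n$ locally finite, and \emph{all} of whose subunions are measurable since $\bigcup_{U\in\mathcal{V}}C_U=\{x\colon F(x)\cap\bigcup\mathcal{V}\neq\emptyset\}$ with $\bigcup\mathcal{V}$ open. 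Choose a countable $\mathcal{V}_n$ attaining the supremum of $\mu(\bigcup_{U\in\mathcal{V}}C_U)$ over countable $\mathcal{V}$; if the complement had positive measure, each trace $C_U\cap N$ would be null and Theorem 1 (for the restricted, still perfect, measure on $N$) would give a non-measurable subunion, a contradiction; then take $t_U\in U$, $D=\{t_U\colon U\in\mathcal{V}_n,\ n\in\omega\}$, and your compactness argument goes through verbatim. Finally, when you invoke Fact 2 for $x\mapsto F(x)\cap T$, lower $\mathcal{S}$-measurability of this intersection is not automatic ($T$ is closed, and intersecting a lower measurable multifunction with a closed set destroys lower measurability in general); it does hold here, but only through compactness of the values: for closed $C$, $\{x\colon F(x)\cap C\neq\emptyset\}=\bigcap_{m}\{x\colon F(x)\cap\{y\colon d(y,C)<1/m\}\neq\emptyset\}$, and this argument should be made explicit rather than asserted.
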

	
	\begin{proof}
		We will follow the method used in the proof of Fact 2 in \cite{KM} and \cite{FGPR}.
		Without the loss of generality, we can assume that $Y$ is a completion of $\bigcup\{F(x) \colon x \in X\}$. 
		
		Let $\rho$ be a metric on $Y$ such that $diam(Y)<1$. We will define a selector $f \colon X \to Y$ as the limit of functions $f_n \colon X \to Y$ such that 
		\begin{itemize}
			\item[(1)] $f_n$ is $\mathcal{S}$-measurable,
			\item[(2)] $\forall_{x \in X}\ \rho(f_n(x), F(x)) < \frac{1}{2^k}$,
			\item[(3)] $\forall_{x \in X}\ \forall_{n>0}\ \rho(f_n(x), f_{n+1}(x)) < \frac{1}{2^{k+2}}$,
			\item[(4)] $f_0$ is a constant function.   
		\end{itemize}

		Suppose that the sequence $\{f_k \colon  k < n\}$ fulfilling $(1)-(4)$ has been constructed. We construct $n$-step.
		Let $$\mathcal{U}_n = \{U \subset Y \colon diam(U) < \frac{1}{2^{n+1}}\}$$
		be any locally finite cover of $Y$. Since $F(x)$ is compact, $$|\{U \in \mathcal{U}_n \colon F(x)\cap U \not = \emptyset\}|<\omega.$$
		
		Well order
		$\mathcal{U}_n = \{U^n_\alpha \colon \alpha \in T_n\}$.
		For any $t \in T_n$ let 
		$$C^n_\alpha = \{x \in X \colon F(x)\cap U^n_\alpha \not = \emptyset\}$$
		and
		$$D^n_\alpha = \{x \in X \colon \rho(f_{n-1}(x), y) < \frac{1}{2^{n+1}} \textrm{ for some } y \in U^n_\alpha\}.$$
		Obviously, $\{C^n_\alpha \colon \alpha \in T_n\}$ and $\{B^n_\alpha \colon \alpha \in T_n, B^n_\alpha = C^n_\alpha \cap D^n_\alpha\}$
		are point-finite covers of $X$.
		Moreover,  $\bigcup\{B^n_\alpha \colon \alpha < T'_n\} \in \mathcal{S}$ for any $T'_n \subset T_n$.
		By Theorem 1 there is a countable $S'_n \subset T'_n$ such that $\mu(B^n_\alpha)=0$ for all $\alpha \in S'_n$ and
		$$\mu(\bigcup\{B^n_\alpha \colon \alpha \in T'_n \setminus S'_n\})=0.$$
		Now, put
		$$
		A^n_\alpha = \left\{ \begin{array}{ll}
		B^n_\alpha \setminus (\bigcup\{B^n_\beta \colon \beta< \alpha, \beta \in T'_n\setminus S'_n\}\cup \bigcup\{B^n_\gamma \colon \gamma \in S'_n\}) & \textrm{iff } \alpha \in T'_n\setminus S'_n\\
		B^n_\alpha \setminus \bigcup\{B^n_\beta \colon \beta< \alpha, \beta \in T'_n\} & \textrm{iff } \alpha \in S'_n
		\end{array} \right.
		$$
		Thus $\{A^n_\alpha \colon \alpha \in T_n\}$ is a disjoint cover of $\mathcal{U}_n$
		and
		$\bigcup\{A^n_\alpha \colon \alpha \in T'_n\}\in \mathcal{S}$ for each $T'_n \subset T_n$.
		
		Now, put $f_n(x)=z_\alpha$ iff $x\in A^n_\alpha$ and $z_\alpha \in \mathcal{U}^n_\alpha$. Clearly, $f_n$ fulfills $(1)-(4)$. Moreover, for all $x \in X$ 
		$$ f(x) =\lim_{n\to\infty} f_n(x) \textrm{ and } f(x) \in F(x)$$
		and the sequence is uniformly convergent to $f$.
		By Fact 2, the selector $f$ is $\mathcal{S}$-measurable.  
	\end{proof}
	
	\begin {thebibliography}{123456}
	\thispagestyle{empty}
	
	\bibitem {AF} {\sc B. Aniszczyk and R. Frankiewicz}, 
	\emph{A theorem on completely additive family of real valued functions},
	Bull. Ac. Pol.: Math., 34(9-10), (1986), 597-598.
	
	\bibitem{FD}{\sc F. R. Drake}
	\emph{Set theory an introduction to large cardinals},
	Studies in Logic and the Foundations of Mathematics, 76. North-Holland Publishing Co., Amsterdam, 1974.
	
	\bibitem{FG} {\sc R. Frankiewicz and A. Gutek},
	\emph{Remarks on the decomposition of spaces on meager set},
	Bull. Acad. Polon. Sci. S\'er. Sci. Math. 30 (1-2) (1982), 91-96.
	
	\bibitem {FGPR} {\sc R. Frankiewicz, A. Gutek, S. Plewik and J. Roczniak}, 
	\emph{On the theorem of measurable selectors},
	Bull. Ac. Pol.: Math., 30(1-3), (1982), 33-40.
	
	\bibitem {F} {\sc D. H.  Fremlin}, 
	\emph{Measure-additive coverings and measurable selectors},
	Dissertationes Math.  260, (1987), 839-849.
	
	\bibitem {FHJ} {\sc D. H. Fremlin, R. W. Hansell and H. J. K. Junnila}, 
	\emph{Borel functions of bounded class},
	Trans. Amer. Math. Soc, 277(2), (1983), 839-849.
	
	\bibitem{H} {\sc P. Holick\'y},
	\emph{Decompositions of Borel bimeasurable mappings between complete metric spaces}, Topology Appl. 156(2) (2008), 217-226.
	
	\bibitem{TJ}{\sc T. Jech},
	{\sl Set theory},
	The third millennium edition, revised and expanded. Springer Monographs in Mathematics. Springer-Verlag, Berlin, 2003. 
	
	\bibitem {KP} {\sc J. Kaniewski and R. Pol}, 
	\emph{Borel-measurable selectors for compact-valued mappings in the non-separable case},
	Bull. Ac. Pol. Sci. S\'er. Sci. Math. Astronom. Phys.: Math., 23(10), (1975), 1043-1050.
	
	\bibitem{KM}{\sc K. Kuratowski and A. Mostowki},
	\emph{Set theory with an introduction to descriptive set theory},
	North-Holland Publishing Co., Amsterdam-New York-Oxford; PWN Polish Scientific Publishers, Warsaw, 1976.
	
	\bibitem {KRN} {\sc K. Kuratowski and C. Ryll-Nardzewski}, 
	\emph{A general theorem on selectors},
	Bull. Ac. Pol. Sci. S\'er. Sci. Math. Astronom. Phys.: Math., 13, (1965), 397-403.
	
	\bibitem{CRN} C. Ryll-Nardzewski, On quasi-compact measures, Bull. Acad. Polon. Sci. Ser. Sci. Math. 40 (1953), pp 125-130.
	
	\bibitem{VS} V.V. Sazonov, On perfect measures, Amer. Math Soc. Transl. 48(2) (1965), pp 229--253. 
	
	\bibitem{SZ} {\sc J. Spurn\'y and M. Zelen\'y},
	\emph{ Additive families of low Borel classes and Borel measurable selectors}. 
	Canad. Math. Bull. 54(1) (2011), 180-192.
	
\end{thebibliography}

\noindent
{\sc Joanna Jureczko}
\\
Wroc\l{}aw University of Science and Technology, Wroc\l{}aw, Poland
\\
{\sl e-mail: joanna.jureczko@pwr.edu.pl}

\end{document}